\documentclass[ASNA]{USG} 
\usepackage{anyfontsize} %

\usepackage{colortbl}   
\usepackage{xcolor}     

\usepackage{enumitem}
\usepackage{tikz}
\usetikzlibrary{calc}

\usepackage{steinmetz}

\articletype{MANUSCRIPT}%

\journal{}

\startpage{1}
\articledoi{}

\def\dvt{d_{vt}}
\def\Sym{{\rm Sym}}
\def\Aut{{\rm Aut}}
\newcommand{\Y}{\mathrm{Y}}
\newcommand{\ZZ}{\mathbb Z}
\newcommand{\D}{\mathrm{D}}
\def\Cay{{\rm Cay}}
\def\gvt{g_{vt}}

\begin{document}
\title{Vertex-transitive closures of graphs}
\author[1]{Martin Bachrat{\' y}}[https://orcid.org/0000-0002-4300-7507]
\author[2]{Grahame Erskine}[https://orcid.org/0000-0001-7067-6004]
\author[3]{{\v S}tef{\' a}nia Glevitzk{\' a}}[https://orcid.org/0009-0003-4810-0326]
\author[1,2]{Jozef {\v S}ir{\' a}{\v n}}[https://orcid.org/0000-0002-5901-7646]
\author[2,4]{James Tuite}[https://orcid.org/0000-0003-2604-7491]

\address[1]{\orgdiv{Department of Mathematics and Descriptive Geometry, }\orgname{Slovak University of Technology, }%
\orgaddress{\state{Bratislava, }\country{Slovakia}}}

\address[2]{\orgdiv{School of Mathematics and Statistics, }\orgname{Open University, }%
\orgaddress{\state{Milton Keynes, }\country{UK}}}

\address[3]{\orgdiv{Department of Algebra and Geometry, }\orgname{Comenius University, }%
\orgaddress{\state{Bratislava, }\country{Slovakia}}}

\address[4]{\orgdiv{Department of Informatics and Statistics, }\orgname{Klaip\.{e}da University, }%
\orgaddress{\country{Lithuania}}}

\corres{James Tuite  (\email{james.t.tuite@open.ac.uk})}

\keywords{vertex-transitive graphs | graph automorphisms | regular graphs | vertex-transitive closures}

\abstract[ABSTRACT]{A \emph{vertex-transitive closure} of $\Gamma$ is a vertex-transitive supergraph of $\Gamma$ on the same vertex set. The \emph{vertex-transitive number} of a graph $\Gamma$, denoted by $\dvt(\Gamma)$, is the smallest integer for which there exists a $\dvt(\Gamma)$-regular vertex-transitive closure of $\Gamma$. In this paper we use various algebraic and combinatorial methods to study vertex-transitive closures of graphs and the associated vertex-transitive number.}


\maketitle


	\section{Introduction}
	
	Let $\Gamma$ be a finite simple graph. A \emph{vertex-transitive closure} of $\Gamma$ is a vertex-transitive supergraph of $\Gamma$ on the same vertex set. The \emph{vertex-transitive number} of a graph $\Gamma$, denoted by $\dvt(\Gamma)$, is the smallest integer for which there exists a $\dvt(\Gamma)$-regular vertex-transitive closure of $\Gamma$. The concept of a vertex-transitive closure was first briefly introduced in~\cite{BachratySiran}, where it was proved that for every odd prime power $q\geq 37$ there is no vertex-transitive closure of the polarity graph $B(q)$ of degree $q+3$. 
	
	\begin{figure}[htb]
\centering
\subfloat[Graph $K_{5,4}$\label{fig:bigvt:a}]{
\begin{tikzpicture}[thick]
\foreach \i in {1,2,...,5}
\node[circle, draw] (A\i) at (0,\i) {};

\foreach \j in {1,2,...,4}
\node[circle, draw] (B\j) at (3,0.5+\j) {};

\foreach \i in {1,2,...,5}
\foreach \j in {1,2,...,4}
\draw (A\i)--(B\j);
\end{tikzpicture}
}
\quad\quad\quad
\subfloat[Graph $K_5 \oplus K_4$\label{fig:bigvt:b}]{
\begin{tikzpicture}[thick]
\foreach \i in {1,2,...,5}
\node[circle, draw] (A\i) at (18+\i*72:1) {};

\foreach \i in {1,2,...,5}
\foreach \j in {1,2,...,5}
\draw (A\i)--(A\j);

\foreach \i in {1,2,...,4}
\node[circle, draw] (B\i) at ($(45+\i*90:0.85)+(2.5,-0.208)$) {};

\foreach \i in {1,2,...,4}
\foreach \j in {1,2,...,4}
\draw (B\i)--(B\j);

\node[draw=white] at (0,-1.2) {};
\end{tikzpicture}
}
\caption{The complete bipartite graph $K_{5,4}$ and its complement $K_5 \oplus K_4$}
\label{fig:bigvt}
\end{figure}	
	
	\begin{example}\label{ex:bigvt}
Let $K_{5,4}$ be the complete bipartite graph shown in Figure~\ref{fig:bigvt:a}. The automorphism group of $K_{5,4}$ is isomorphic to the direct product of the symmetric groups $\Sym(5)$ and $\Sym(4)$, and so the order of $\Aut(K_{5,4})$ is $2880$. Hence $|\Aut(K_{5,4})|$ is much larger than $|V(K_{5,4})|$, and in this sense we may view $K_{5,4}$ as a highly symmetric graph. Next we find the vertex-transitive number of $K_{5,4}$. First, note that if a graph is vertex-transitive, so is its complement. Hence every vertex-transitive closure of $K_{5,4}$ corresponds to a vertex-transitive spanning subgraph $\Gamma$ of the complement of $K_{5,4}$, which is the disjoint union of $K_5$ and $K_4$ displayed in Figure~\ref{fig:bigvt:b}. Note that all components of $\Gamma$ have the same number of vertices, and this number must divide both $5$ and $4$. It follows that $\Gamma$ must be the edgeless graph on $9$ vertices, and consequently the only vertex-transitive closure of $K_{5,4}$ is the complete graph $K_9$. Hence we have $\dvt(K_{5,4})=8$. So although $K_{5,4}$ has a rich group of automorphisms, it is far from being a vertex-transitive graph in the sense that the difference between the maximum degree of $K_{5,4}$ and the vertex-transitive number of $K_{5,4}$ is relatively large.
\end{example}

	More generally, if $r_1 \geq r_2 \geq \dots \geq r_t \geq 1$ for $t \geq 2$, then a vertex-transitive supergraph of the complete multipartite graph $K_{r_1,r_2,\dots ,r_t}$ corresponds to a vertex-transitive spanning subgraph $\Gamma $ of $K_{r_1} \oplus K_{r_2} \oplus \dots \oplus K_{r_t}$. The components of $\Gamma $ must have order dividing $\gcd (r_1,r_2,\dots ,r_t)$. Considering any vertex of $K_{r_1,r_2,\dots ,r_t}$ in the part of size $r_1$, it follows that the degree of $K_{r_1,r_2,\dots ,r_t}$ must be increased by at least $r_1-\gcd (r_1,r_2,\dots ,r_t)-1$ to achieve a vertex-transitive graph and this can be accomplished by setting $\Gamma $ to be the union of $\frac{r_1+r_2+\dots+r_t}{\gcd (r_1,r_2,\dots ,r_t)}$ cliques.

	\begin{example}\label{ex:asy}
		Let $\Gamma$ be the asymmetric graph shown in Figure~\ref{fig:smallvt:b}. Since $\Gamma$ is asymmetric, we know that $|\Aut(\Gamma)|=1$. On the other hand, $\Gamma$ is a spanning subgraph of the triangular prism graph $\Y_3$ shown in Figure~\ref{fig:smallvt:a}, and since $\Y_3$ is vertex-transitive, we find that $\dvt(\Gamma)=\Delta(\Gamma)$. Since there are no asymmetric graphs on five or fewer vertices~\cite{ErdosRenyi}, this is the smallest example (in terms of $|V(\Gamma)|$) of an asymmetric graph with its vertex-transitive number equal to its maximum degree. The smallest example in terms of $|E(\Gamma)|$ is the graph $\Gamma'$ obtained from $\Gamma$ by removing the bottommost edge. (To see this note that the triangular prism graph is a vertex-transitive closure of $\Gamma'$.)
	\end{example}

\begin{figure}[htb]
\centering
\subfloat[Triangular prism graph\label{fig:smallvt:a}]{
\begin{tikzpicture}[thick]
\foreach \i in {1,2,3}
\node[circle, draw] (A\i) at (120*\i+90:0.8) {};

\foreach \i in {1,2,3}
\node[circle, draw] (B\i) at (120*\i+90:2) {};

\foreach \i in {1,2,3}
\draw (A\i)--(B\i);

\draw (A1) -- (A2) -- (A3) -- (A1);
\draw (B1) -- (B2) -- (B3) -- (B1);
\end{tikzpicture}
}
\quad\quad\quad
\subfloat[An asymmetric graph on $6$ vertices\label{fig:smallvt:b}]{
\begin{tikzpicture}[thick]
\foreach \i in {1,2,3}
\node[circle, draw] (A\i) at (120*\i+90:0.8) {};

\foreach \i in {1,2,3}
\node[circle, draw] (B\i) at (120*\i+90:2) {};

\draw (A1) -- (A2) -- (A3) -- (A1);

\draw (A1) -- (B1) -- (B2);
\draw (A2) -- (B2) -- (B3);
\end{tikzpicture}
}
\caption{Triangular prism graph and its asymmetric spanning subgraph}
\label{fig:smallvt}
\end{figure}

	\section{Further background}
	
	All graphs in this paper are assumed to be simple and, unless stated otherwise, finite. The vertex set and the edge set of a graph $\Gamma$ will be denoted by $V(\Gamma)$ and $E(\Gamma)$ respectively, and the sizes of these sets will be referred to as the \emph{order} and the \emph{size} of $\Gamma$. We let $K_n$, $P_n$, and $S_n$ denote the complete graph, the path graph, and the star graph of order $n$, and $K_{m,n}$ the complete bipartite graph (of order $m+n$) with parts of size $m$ and $n$. The complement of a graph $\Gamma$, the disjoint union of graphs $\Gamma_1$ and $\Gamma_2$, and their join will be denoted by $\overline{\Gamma}$, $\Gamma_1 \oplus \Gamma_2$, and $\Gamma_1 \vee \Gamma_2$, respectively. We also let $\ZZ_n$ stand for the cyclic group of order $n$ and $\D_n$ the dihedral group of order $2n$. 
	
	For a graph $\Gamma$ and a vertex $u$ of $\Gamma $, we write $\deg_{\Gamma}(u)$ for the degree of $u$, and $\Delta(\Gamma)$ for the maximum degree of $\Gamma$. The only graphs with maximum degree $0$ are edgeless graphs. Every graph with maximum degree $1$ is the disjoint union of at least one $K_2$ and arbitrarily many (possibly zero) isolated vertices. Graphs with maximum degree $2$ are disjoint unions of cycles, paths and isolated vertices with at least one component being either a cycle or a path of length at least $2$.
	
	For a symmetric subset $X$ of a finite group $G$ such that $1_G\notin X$, we let $\Cay(G,X)$ signify the corresponding Cayley graph. Note that $\Cay(G,X)$ is connected if and only if $\langle X \rangle =G$. We let $M_n$ and $Y_n$ denote the M\"{o}bius ladder and the prism graph, respectively. It is well known that both $M_n$ and $Y_n$ are Cayley graphs with $M_n = \Cay(\ZZ_{2n},\{1,-1,n\})$ and $Y_n = \Cay(\D_n,\{r,r^{-1},s\})$ (here $r$ and $s$ denote the standard generators of $\D_n$).
	
	The following well-known theorem of Dirac from~\cite{Dirac} will be useful. 
	
	\begin{theorem}[Dirac's theorem]
		\label{thm:dirac}
		Let $\Gamma$ be a graph of order $n$, with $n\geq 3$. If the minimum degree of $\Gamma$ is at least $n/2$, then $\Gamma$ is Hamiltonian.
	\end{theorem}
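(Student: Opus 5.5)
We plan to prove Dirac's theorem with the classical longest-path argument. Two easy facts come first. Since $n\geq 3$ and $\delta(\Gamma)\geq n/2$, the graph has edges. It is also connected: if $u,v$ are non-adjacent, then $N(u)$ and $N(v)$ lie in the $n-2$ vertices other than $u,v$. They have total size at least $n>n-2$, so they share a neighbour, and any two vertices are at distance at most $2$.

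Next we fix a longest path $P=v_0v_1\cdots v_k$ in $\Gamma$. By maximality, every neighbour of $v_0$ and every neighbour of $v_k$ lies on $P$; otherwise $P$ could be extended. Hence $k+1\leq n$, and each endpoint has at least $n/2$ neighbours among $v_0,\dots,v_k$. Define
\[
S=\{\,i\in\{0,\dots,k-1\} : v_0v_{i+1}\in E(\Gamma)\,\},\qquad
T=\{\,i\in\{0,\dots,k-1\} : v_iv_k\in E(\Gamma)\,\}.
\]
Then $|S|,|T|\geq n/2$, and both are subsets of a set of size $k\leq n-1<n$. So $S\cap T\neq\emptyset$. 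For $i\in S\cap T$ we obtain the cycle
\[
C=v_0v_{i+1}v_{i+2}\cdots v_kv_iv_{i-1}\cdots v_1v_0,
\]
which passes through every vertex of $P$. If $i=0$ this reads $v_0v_1\cdots v_kv_0$, which is still a cycle because $v_0v_k\in E(\Gamma)$.

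Finally we show that $C$ is Hamiltonian. Suppose $C$ misses some vertex. Since $\Gamma$ is connected, some vertex $w\notin V(C)$ is adjacent to some vertex $x$ of $C$. Starting at $w$, stepping to $x$, and then going around $C$ gives a path with $k+2$ vertices. This contradicts the maximality of $P$, so $V(C)=V(\Gamma)$ and $\Gamma$ is Hamiltonian.

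The main obstacle is the counting step that produces the index $i\in S\cap T$. The index shift in $S$ must be set up correctly so that both sets live in $\{0,\dots,k-1\}$, and the strict inequality $k<n$ must be used to guarantee that they intersect. The degenerate case $i=0$ needs a quick check, as noted above.
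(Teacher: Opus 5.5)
The paper does not prove this statement; it quotes Dirac's theorem from the literature and uses it once, in the proof of Theorem~\ref{thm:degrees}, so there is no in-paper argument to compare with. Your longest-path argument is the standard proof and it is correct. The one detail you leave implicit is that $C$ has at least three vertices, which holds because $S\subseteq\{0,\dots,k-1\}$ and $|S|\geq n/2>1$ force $k\geq 2$.
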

	
	We will also use the following observation about matchings in connected vertex-transitive graphs. 
	
	\begin{theorem}[\cite{GodsilRoyle}]
		\label{thm:gr}
		Every connected vertex-transitive graph has a matching that misses at most one vertex.
	\end{theorem}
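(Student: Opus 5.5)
The plan is to derive Theorem~\ref{thm:gr} from Dirac's theorem (Theorem~\ref{thm:dirac}) in the dense case, and from the classical structure theory of vertex-transitive graphs in the sparse case.

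\textbf{Reduction.} Let $\Gamma$ be connected and vertex-transitive, say $k$-regular of order $n$. The cases $n\le 2$ are trivial, so assume $n\ge 3$. Any Hamiltonian cycle, or even a Hamiltonian path, yields a matching missing at most one vertex: take every other edge along it. So it suffices to find a Hamiltonian path, or more weakly a spanning subgraph each of whose components has a near-perfect matching. When $k\ge n/2$, Theorem~\ref{thm:dirac} gives a Hamiltonian cycle directly and we are done.

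\textbf{General case.} Here I would not look for Hamiltonicity, which is Lovász's open problem. Instead I would argue through the Gallai--Edmonds decomposition. By vertex-transitivity, the Gallai--Edmonds sets $D$, $A$, $C$ are each invariant under $\Aut(\Gamma)$. Since $\Aut(\Gamma)$ is transitive, each of these sets is either empty or all of $V(\Gamma)$.

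\begin{itemize}
\item If $C=V(\Gamma)$, then $\Gamma$ has a perfect matching.
\item $A=V(\Gamma)$ is impossible, because $A$ consists of the neighbours of $D$ outside $D$, so $D=\emptyset$ would force $A=\emptyset$.
\item If $D=V(\Gamma)$, then every vertex is missed by some maximum matching. Since $A=\emptyset$ and $\Gamma$ is connected, $\Gamma$ is a single factor-critical component. Factor-critical means that $\Gamma-v$ has a perfect matching for each $v$. So a maximum matching misses exactly one vertex.
\end{itemize}

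In every case the maximum matching misses at most one vertex.

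\textbf{Main obstacle.} The delicate step is the Gallai--Edmonds structure: verifying that $D$, $A$, $C$ are automorphism-invariant, and recalling that when $A=\emptyset$ each component of $D$ is factor-critical. Invariance is immediate, because $D$ is defined canonically as the set of vertices missed by some maximum matching. If one prefers to avoid Gallai--Edmonds, the alternative is Tutte's condition with a deficiency-maximizing barrier $S$. One would show via transitivity that $S=\emptyset$ can be taken, so that the number of odd components of $\Gamma$ is at most $1$. I expect this alternative to be harder, so I would use Gallai--Edmonds.
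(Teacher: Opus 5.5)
The paper gives no proof of this result; it only cites Godsil--Royle. Your proposal is nonetheless correct. The $\Aut(\Gamma)$-invariance of the Gallai--Edmonds sets $D$, $A$, $C$, together with transitivity, forces exactly one of them to be $V(\Gamma)$. The case analysis is then sound: $C=V$ gives a perfect matching, $A=V$ is impossible, and $D=V$ makes the connected graph $\Gamma=\Gamma[D]$ factor-critical.

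Compared with the cited source, you have the same skeleton but a heavier black box. The standard proof in Godsil--Royle works directly with maximum matchings. It uses transitivity to get the same dichotomy: either there is a perfect matching, or every vertex is missed by some maximum matching. It then rules out a maximum matching missing two vertices by a minimum-distance choice and an alternating-path exchange. In effect, it proves Gallai's lemma: a connected graph in which every vertex is missed by some maximum matching is factor-critical.

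Your route cites the full Gallai--Edmonds structure theorem instead. The clause you need from it, that every component of $\Gamma[D]$ is factor-critical, is Gallai's lemma itself. That clause holds whether or not $A$ is empty, so you need not tie it to $A=\emptyset$. Your version is shorter and more conceptual, since invariance of the canonical decomposition does all the symmetry work. The direct argument is self-contained and shows where the real combinatorial content lies.

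Two side remarks. First, the Dirac reduction is superfluous: the Gallai--Edmonds argument already covers all orders and degrees, so that paragraph can be dropped. Second, your aside that ``a spanning subgraph each of whose components has a near-perfect matching'' would suffice is false as stated. Two odd components already leave two vertices uncovered. It does no harm only because you never use it.
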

	
	\section{Graphs with a unique vertex-transitive closure}
	
	Noting that every graph $\Gamma$ of order $n$ has a vertex-transitive closure $K_n$, it follows that $\Gamma$ has a unique vertex-transitive closure if and only if $\dvt(\Gamma)=n-1$. In this section we answer various questions about graphs that have a unique vertex-transitive closure. First, we address the problem of determining the minimum possible number of edges of such graphs.

	\begin{theorem}\label{thm:edges}
		A graph of order $n$ with a unique vertex-transitive closure has at least $n-2$ edges for $n$ odd and at least $n-1$ edges for $n$ even. Moreover, the only graphs of odd order with a unique vertex-transitive closure and $n-2$ edges are $K_3 \oplus 2K_1$ and $S_{n-1} \oplus K_1$, and the only graphs of even order with a unique vertex-transitive closure and $n-1$ edges are $K_3 \oplus K_1$ and $S_n$.
	\end{theorem}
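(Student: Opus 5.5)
Since $\dvt(\Gamma)=n-1$ means $\Gamma$ has no vertex-transitive closure other than $K_n$, we argue in the complement. A graph $\Gamma$ has a unique vertex-transitive closure if and only if $\overline{\Gamma}$ contains no nonempty vertex-transitive spanning subgraph. So the plan is to show that if $\Gamma$ has few edges, then $\overline{\Gamma}$ is dense enough to contain a perfect matching, a near-perfect structure, or a Hamilton cycle. Each of these is a vertex-transitive spanning subgraph: $\frac{n}{2}K_2$, $C_n$, or more generally a union of isomorphic vertex-transitive pieces.

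\emph{Even $n$.} Suppose $|E(\Gamma)|\le n-2$. We want a perfect matching in $\overline{\Gamma}$, i.e.\ a perfect matching of $K_n$ avoiding the edges of $\Gamma$. Edge-decompose $K_n$ into $n-1$ perfect matchings (a $1$-factorization). By pigeonhole, at least one of these $n-1$ matchings contains no edge of $\Gamma$, so it lies in $\overline{\Gamma}$. Hence $\frac{n}{2}K_2$ is a nonempty vertex-transitive spanning subgraph of $\overline{\Gamma}$, and $\Gamma$ has a closure other than $K_n$. This gives the bound $n-1$.

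For the extremal case $|E(\Gamma)|=n-1$, $\Gamma$ must meet every one of the $n-1$ matchings of \emph{every} $1$-factorization of $K_n$ in exactly one edge. We would show this forces $\Gamma$ to be a star or a triangle (plus isolated vertices):
\begin{itemize}
\item If $\Gamma$ contains two disjoint edges $e,f$, we pick a $1$-factorization in which $e$ and $f$ lie in the same factor. This is possible by relabelling the standard $\ZZ_{n-1}\cup\{\infty\}$ factorization, whose factors contain many disjoint pairs. Some other factor then misses $\Gamma$, a contradiction.
\item So $\Gamma$ has no two disjoint edges, i.e.\ it is a star or a triangle together with isolated vertices.
\item Counting edges then gives $S_n$, or $K_3\oplus K_1$ when $n=4$.
\end{itemize}
Conversely, for $S_n$ the complement is $K_{n-1}\oplus K_1$, and for $K_3\oplus K_1$ it is $S_4$. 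Neither has a nonempty vertex-transitive spanning subgraph, since such a subgraph would have to cover the isolated vertex, respectively be regular on the star.

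\emph{Odd $n$.} Suppose $|E(\Gamma)|\le n-3$. A perfect matching is impossible, so the target is a Hamilton cycle in $\overline{\Gamma}$, or at least some vertex-transitive spanning subgraph. $K_n$ decomposes into $\frac{n-1}{2}$ Hamilton cycles, which is too few for pigeonhole. Instead we use that vertex-transitive spanning subgraphs could also be $C_3$-factors, unions of cycles of equal length, and so on, and argue directly.
\begin{itemize}
\item If $\overline{\Gamma}$ has minimum degree at least $n/2$, Theorem~\ref{thm:dirac} gives a Hamilton cycle.
\item Otherwise some vertex $v$ has degree more than $n/2-1$ in $\Gamma$. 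Then the remaining at most $n-3-\deg_\Gamma(v)$ edges are few, and we handle this by hand. Take a cycle through $v$ using two non-neighbours of $v$ in $\Gamma$, which requires $\deg_\Gamma(v)\le n-3$. Then extend to a Hamilton cycle of $\overline{\Gamma}$ in the remaining nearly complete graph.
\item If $\deg_\Gamma(v)\ge n-2$, then $\Gamma$ already has at least $n-2$ edges, contradicting $|E(\Gamma)|\le n-3$.
\end{itemize}

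For equality $|E(\Gamma)|=n-2$, the same analysis identifies the obstructions:
\begin{itemize}
\item $S_{n-1}\oplus K_1$: the centre has degree $n-2$, leaving it only one neighbour in $\overline{\Gamma}$. Odd order then forbids regular degree $1$.
\item $K_3\oplus 2K_1$ for $n=5$.
\end{itemize}
Verifying that these two graphs work is routine, as in the even case.

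The main obstacle is the extremal characterisation. In particular, the odd case requires showing that every other graph with $n-2$ edges admits a Hamilton cycle (or a $C_3$-factor, etc.) in its complement. I would handle this by a case split on the maximum degree of $\Gamma$, combining Theorem~\ref{thm:dirac} with explicit constructions. Small $n$ (say $n\le 7$) would be treated separately.
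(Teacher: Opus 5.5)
Your even-order argument is correct, and it takes a genuinely different, more elementary route than the paper. Pigeonholing over a $1$-factorization of $K_n$ gives the bound at once. Relabelling a factorization so that two disjoint edges of $\Gamma$ share a factor then forces the edges of $\Gamma$ to pairwise intersect. The paper instead invokes Ore's extremal results on Hamilton cycles and Hamilton paths.

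Your odd-order lower bound is also sound once the step ``extend to a Hamilton cycle in the remaining nearly complete graph'' is made precise. The graph $\overline{\Gamma}-v$ has $n-1$ vertices and minimum degree at least $(n+1)/2$, so it is Hamiltonian-connected by Ore's degree criterion. Hence any two $\overline{\Gamma}$-neighbours of $v$ are joined by a Hamilton path of $\overline{\Gamma}-v$.

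The genuine gap is the characterisation for odd $n$, which you announce but do not prove. You need that every graph of odd order $n$ with $n-2$ edges, other than $S_{n-1}\oplus K_1$ and $K_3\oplus 2K_1$, has a complement containing a nonempty vertex-transitive spanning subgraph. Your bound argument does not carry over verbatim. With $n-2$ edges and $\deg_\Gamma(v)=(n-1)/2$, the graph $\overline{\Gamma}-v$ can have minimum degree only $(n-1)/2$, exactly half its order, which is below the Hamiltonian-connectedness threshold. The graph $K_3\oplus 2K_1$ shows that the failure there can be real.

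The paper settles this in one line by Ore's theorem: the only non-Hamiltonian graphs of order $n$ with ${n-1\choose 2}+1$ edges are $K_{n-1}$ with a pendant edge and $K_2\vee\overline{K_3}$. Their complements are precisely your two exceptions. Alternatively, you can complete your own case split:
\begin{itemize}
\item $\deg_\Gamma(v)=n-2$ forces $\Gamma=S_{n-1}\oplus K_1$.
\item $(n+1)/2\le\deg_\Gamma(v)\le n-3$ is handled exactly as in the bound.
\item If $\deg_\Gamma(v)=(n-1)/2$, then $\overline{\Gamma}-v$ misses at most $(n-3)/2$ edges. So for $n\ge 7$ it has at least ${n-2\choose 2}+3$ edges and is Hamiltonian-connected by Ore's edge criterion.
\item This leaves only $n=5$, where checking $K_4$ minus one edge shows that the only failure is $K_3\oplus 2K_1$.
\end{itemize}
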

	\begin{proof}
		Since the complement of a Hamilton cycle is vertex transitive, if $\overline {\Gamma }$ is Hamiltonian, then it follows that $\Gamma $ has $\dvt(\Gamma )$ at most $n-3$. It is shown in~\cite[Theorem~4.3]{Ore2} that the largest possible size of a non-Hamiltonian graph is ${n-1 \choose 2}+1$ and any such non-Hamiltonian graph is isomorphic either to a clique $K_{n-1}$ with a leaf attached to one vertex, or to $K_2 \vee \overline{K_3}$. Suppose that $\Gamma $ has $m \geq n-2$ edges; then $\overline {\Gamma}$ has size at least ${n-1 \choose 2}+1$, so, by the previous observation, any graph with a unique vertex-transitive closure has at least $n-2$ edges. Moreover, any graph with a unique vertex-transitive closure and size exactly $n-2$ must be either $K_3 \oplus 2K_1$ or $S_{n-1} \oplus K_1$. 
		
		It is easily verified that $K_3 \oplus 2K_1$ has a unique vertex-transitive completion. Observe that any non-complete vertex-transitive completion of $S_{n-1} \oplus K_1$ must be $(n-2)$-regular. This is impossible for odd $n$, so $S_{n-1} \oplus K_1$ does have a unique vertex-transitive completion if $n$ is odd. 
		
		For a graph $\Gamma $ with even order $n$ and a unique vertex-transitive closure, $\overline {\Gamma }$ cannot contain a perfect matching, for otherwise $\Gamma $ could be completed to a clique minus a perfect matching. As the complement of $S_{n-1} \oplus K_1$ does have a perfect matching, no graph with even order and size $n-2$ has a unique vertex-transitive closure. Next, suppose that $\Gamma$ is a graph of even order $n$ with a unique vertex-transitive closure and $n-1$ edges. As  $\overline{\Gamma}$ has no perfect matching, it also has no Hamiltonian path. According to~\cite[Theorem~4.1]{Ore2}, any graph of size ${n-1 \choose 2}$ without a Hamiltonian path is isomorphic either to $K_{n-1} \oplus K_1$ or $S_4$. It can be easily seen that the complements of these graphs (isomorphic to $S_n$ and $K_3\oplus K_1$) do have a unique vertex-transitive completion.     
	\end{proof}
	
	In Theorem~\ref{thm:edges} we looked at the minimum possible number of edges in a graph with a unique vertex-transitive closure. In the following theorem we classify all possible maximal degrees of such graphs.  
	
	\begin{theorem}\label{thm:degrees}
		Let $n\geq 4$, and let $S$ be the set of all graphs of order $n$ with a unique vertex-transitive closure. Then the maximal degrees of graphs in $S$ are exactly the integers from $\lfloor n/2 \rfloor$ to $n-1$. 
	\end{theorem}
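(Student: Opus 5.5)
The proof splits into a lower bound and a construction.

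\textbf{Lower bound.} Suppose $\Gamma$ has order $n\ge 4$ and $\Delta(\Gamma)\le\lfloor n/2\rfloor-1$. Then every vertex of $\overline{\Gamma}$ has degree at least $n-1-\Delta(\Gamma)\ge n-\lfloor n/2\rfloor=\lceil n/2\rceil\ge n/2$. By Dirac's theorem (Theorem~\ref{thm:dirac}), $\overline{\Gamma}$ has a Hamilton cycle $C$. The complement $\overline{C}$ is vertex-transitive, $(n-3)$-regular and contains $\Gamma$, so it is a second vertex-transitive closure besides $K_n$. Hence every graph in $S$ has maximum degree at least $\lfloor n/2\rfloor$. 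This is exactly the argument used in the proof of Theorem~\ref{thm:edges}.

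\textbf{Constructions.} For each $d$ with $\lfloor n/2\rfloor\le d\le n-1$ I need a graph with maximum degree $d$ whose complement contains no spanning vertex-transitive subgraph $H$ with at least one edge. For $d=n-1$ this is immediate, for instance with $S_n$: a vertex of degree $n-1$ forces every vertex-transitive supergraph to be $K_n$.

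For general $d$, put $a=n-d$, so $a\le\lceil n/2\rceil$. Take $\overline{\Gamma}$ to consist of two cliques $K_{d+1}$ and $K_a$ glued at a single cut vertex $c$. Then $\Gamma$ has the following degrees:
\begin{itemize}
\item $c$ has degree $0$;
\item the vertices of $K_a\setminus c$ have degree $d$;
\item the vertices of $K_{d+1}\setminus c$ have degree $a-1\le d$.
\end{itemize}
So $\Delta(\Gamma)=d$.

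Now let $H\subseteq\overline{\Gamma}$ be spanning, vertex-transitive and non-empty. All components of $H$ are isomorphic, of some order $k\ge 2$, and each is a connected vertex-transitive graph.

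\emph{Claim: no component with $k\ge 3$ uses both sides.} A connected vertex-transitive graph of order at least $3$ has no cut vertex; this follows from the standard fact that its connectivity is positive and equal on all vertices (equivalently, it is $2$-connected). A component meeting both sides would have $c$ as a cut vertex, which is impossible.

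\emph{Remaining cases.} Either $k=2$, in which case $H$ is a perfect matching, or every component lies inside one of the two cliques. This leaves three divisibility situations:
\begin{itemize}
\item $k\mid\gcd(d+1,a-1)=\gcd(d+1,n)$, when $c$ lies in a component inside $K_{d+1}$;
\item $k\mid\gcd(d,a)=\gcd(d,n)$, when $c$ lies in a component inside $K_a$;
\item a perfect matching containing an edge at $c$.
\end{itemize}
A perfect matching of $\overline{\Gamma}$ exists exactly when $n$ is even. So whenever $n$ is odd and $\gcd(d,n)=\gcd(d+1,n)=1$, this $\Gamma$ lies in $S$.

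\textbf{Main obstacle.} The real difficulty is the number theory: for some pairs $(n,d)$ the numbers $\gcd(d,n)$ and $\gcd(d+1,n)$ both exceed $1$, and for $n$ even perfect matchings must also be excluded. I would handle this by varying the template rather than insisting on one family:
\begin{itemize}
\item glue the two cliques along a pendant path or bridge instead of a cut vertex;
\item use three blocks in a chain, for example $K_p$, $K_q$, $K_r$ sharing cut vertices;
\item remove a few edges inside a clique so that a near-perfect matching argument via Theorem~\ref{thm:gr} breaks.
\end{itemize}
Each change again forces $H$ to live inside blocks with incompatible sizes, for instance blocks of coprime or prime orders. To kill perfect matchings for even $n$, I would make one block of odd order attached only through a cut vertex, so that a perfect matching would have to use the cut vertex from both sides, which is impossible.

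Verifying that such a choice exists for every $\lfloor n/2\rfloor\le d\le n-2$, including small $n$ and $d=\lfloor n/2\rfloor$ where the side sizes are almost equal, is the step I expect to require the most care. It may need a short case split on $n$ modulo small numbers, plus a direct check for $n\le 7$.
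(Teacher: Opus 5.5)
Your lower bound is correct and matches the paper's argument. The gap is in the upper range: you never produce a graph in $S$ for each $d$ with $\lfloor n/2\rfloor\le d\le n-2$, so the statement is not proved as written. Your cut-vertex template works only when $n$ is odd and $\gcd(d,n)=\gcd(d+1,n)=1$. It fails for every even $n$, since, as you note yourself, the complement then always has a perfect matching. The proposed repairs (bridges, chains of blocks, deleting edges, an odd block behind a cut vertex) are unverified sketches. Also, a cut vertex with only two sides cannot kill perfect matchings when $n$ is even. (Separately, your justification that a connected vertex-transitive graph of order at least $3$ has no cut vertex is loosely phrased, since ``positive connectivity'' does not give $2$-connectivity. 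The clean argument is that if one vertex were a cut vertex then all would be, but the leaves of a spanning tree never are.)

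The missing idea, which removes the number theory you flag as the main obstacle, is monotonicity. If $\Gamma\in S$, then every supergraph of $\Gamma$ on the same vertex set is in $S$, because any vertex-transitive closure of the supergraph is also one of $\Gamma$ and hence equals $K_n$. Adding edges one at a time raises $\Delta$ by at most one per step and ends at $K_n$. So a single graph in $S$ with $\Delta=\lfloor n/2\rfloor$ gives every value up to $n-1$, and you need only one construction per parity.

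For $n=2k+1$, your own template with $d=k$ already works, since $\gcd(k,2k+1)=\gcd(k+1,2k+1)=1$. The resulting graph is $K_{k,k}\oplus K_1$, which is the paper's example.

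For $n=2k$ you still need a genuinely new construction. The paper takes $\Gamma=K_{k+1}\oplus K_{k-1}$, so $\overline{\Gamma}=K_{k+1,k-1}$, and argues via Theorem~\ref{thm:gr} on any nonempty vertex-transitive spanning subgraph:
\begin{itemize}
\item It cannot be connected, since it would then contain a perfect matching, and the unbalanced bipartite graph has none.
\item A component of odd order would need, for every vertex $v$, a matching missing only $v$. This fails when $v$ lies on the smaller side of that component.
\item Components of even order have perfect matchings, so each meets both parts equally. This contradicts $k+1\neq k-1$.
\end{itemize}
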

	\begin{proof}
		First, let $\Gamma$ be a graph with maximum degree at most $\lfloor n/2 \rfloor - 1$. Then the complement of $\Gamma$ has minimum degree at least $\lceil n/2 \rceil $, and by Theorem~\ref{thm:dirac} we find that $\overline{\Gamma}$ has a Hamiltonian cycle. It follows that the complement of this Hamiltonian cycle in $\overline{\Gamma}$ is a vertex-transitive closure of $\Gamma$, and so in this case $\dvt(\Gamma)\leq n-3$. Next, we will show that there exists a graph $\Gamma$ with maximum degree $\lfloor n/2 \rfloor$ and vertex-transitive number $n-1$. To simplify notation, let $k=\lfloor n/2 \rfloor$. 
		
		For odd $n$, let $n=2k+1$ and $\Gamma=K_{k,k} \oplus K_1$. Note that the complement of $\Gamma$ is $K_k \oplus K_{k}$ together with a single vertex $u$ adjacent to all the other vertices. Every vertex-transitive closure of $\Gamma$ corresponds to a vertex-transitive spanning subgraph $\Gamma'$ of $\overline{\Gamma}$, which is isomorphic to a disjoint union of copies of some connected vertex-transitive subgraph $H$ of $\overline {\Gamma}$.  Suppose that $\Gamma '$ contains a copy $H'$ of $H$ that contain vertices from both cliques of $\overline {\Gamma }$. Then $u$ is a cut-vertex of $H'$ and, since $H$ is vertex-transitive, every vertex of $H'$ must be a cut-vertex, which is impossible. Therefore each copy of $H$ in $\Gamma '$ intersects at most one of the cliques of $\overline{ \Gamma }$ and hence $|H|$ divides both $k$ and $k+1$, from which it follows that $|H| = 1$ and the only vertex-transitive closure of $\Gamma $ is $K_n$. 
		
		For even $n$, set $n=2k$ and $\Gamma = K_{k+1} \oplus K_{k-1}$. We will show that the only vertex-transitive spanning subgraph of $\overline{\Gamma}$ is the edgeless graph on $n$ vertices. First note that $\overline{\Gamma}=K_{k+1,k-1}$, and suppose to the contrary that $\overline{\Gamma}$ has a vertex-transitive spanning subgraph $\Gamma'$ with at least one edge. Since $\overline{\Gamma}$ has no perfect matching, by Theorem~\ref{thm:gr} we deduce that $\Gamma'$ must be disconnected. Let $H$ be a component of $\Gamma'$ and let $A$ and $B$ denote the two parts of the complete bipartite graph $K_{|A|,|B|}$ obtained as the subgraph of $\overline{\Gamma}$ induced by the vertex set of $H$. If $|H|$ is odd, then $H$ has a matching that misses exactly one vertex. By vertex-transitivity of $H$, for each vertex $v$ of $H$ there is a matching of $H$ that misses only $v$. However, there is no such matching if $v$ lies in the smaller of the sets $A$ and $B$. If $|H|$ is even, then $H$ has a perfect matching, and hence $|A|=|B|$. This must be true for each component $H$ of $\Gamma'$, but that is impossible, since the two parts of $\overline{\Gamma}$ do not have equal sizes. Hence we deduce that the only vertex-transitive spanning subgraph of $\overline{\Gamma}$ is the edgeless graph of order $n$, and conclude that $\dvt(\Gamma)=n-1$.
		
		Finally, note that if $\Gamma$ is a graph of order $n$ such that $\dvt(\Gamma)=n-1$, then every supergraph of $\Gamma$ on the same vertex set has vertex-transitive number $n-1$. Since the maximum degree of a supergraph (on the same vertex set) of $\Gamma$ can be any integer from $\Delta(\Gamma)$ to $n-1$, the rest follows easily.
	\end{proof}

	\section{Vertex-transitive numbers of graphs with maximum degree at most $2$}
	
	In this section, we investigate vertex-transitive numbers for graphs that have maximum degree at most $2$. If $\Gamma$ has maximum degree $0$, then it is clearly vertex-transitive, and we have $\dvt(\Gamma)=0$. The following observation addresses graphs with maximum degree $1$.
	
	\begin{proposition}\label{prop:maxdeg1}
		If $\Gamma$ is a graph of order $n$ with maximum degree $1$, then $\dvt(\Gamma)$ is equal to $1$ or $2$, depending on whether $n$ is even or odd.
	\end{proposition}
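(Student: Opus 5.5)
We get a lower bound of $\dvt(\Gamma)\geq \Delta(\Gamma)=1$ from the fact that any supergraph has maximum degree at least $\Delta(\Gamma)$. We then split on the parity of $n$. By the background discussion, $\Gamma$ is a disjoint union of $m\geq 1$ copies of $K_2$ and $n-2m$ isolated vertices. We label the edges of $\Gamma$ as $\{u_i,v_i\}$ for $1\le i\le m$ and the isolated vertices as $w_1,\dots,w_{n-2m}$.

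\emph{Even $n$.} Here $n-2m$ is even, so we pair up the isolated vertices arbitrarily. Adding these $(n-2m)/2$ edges to $\Gamma$ gives the perfect matching $\frac{n}{2}K_2$. This supergraph is $1$-regular and vertex-transitive, since any two vertices are exchanged by an automorphism permuting the $K_2$ components. Combined with the lower bound, $\dvt(\Gamma)=1$.

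\emph{Odd $n$, lower bound.} A $1$-regular graph has an even number of vertices, because its edges partition the vertex set into pairs. So no $1$-regular closure exists, and $\dvt(\Gamma)\geq 2$.

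\emph{Odd $n$, upper bound.} We need a $2$-regular vertex-transitive supergraph, and we take the Hamilton cycle $C_n$, which is vertex-transitive (it is $\Cay(\ZZ_n,\{1,-1\})$). Since $n\ge 3$ is odd, we can arrange all vertices in the cyclic order
\[
u_1,v_1,u_2,v_2,\dots,u_m,v_m,w_1,\dots,w_{n-2m}.
\]
Consecutive vertices, including $w_{n-2m}$ and $u_1$, are joined, so every edge $u_iv_i$ of $\Gamma$ is an edge of this cycle. The only degenerate case is $n-2m=1$ with $m\ge1$, and it still yields a genuine cycle of length $n\ge3$. Hence $\dvt(\Gamma)=2$.

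The main point needing care is the parity obstruction for odd $n$. The rest consists of the two explicit constructions, and the cycle embedding is immediate once the matching edges are listed consecutively.
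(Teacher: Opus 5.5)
Your proposal is correct and follows the paper's proof: for even $n$ you complete $\Gamma$ to a perfect matching, and for odd $n$ you combine the parity obstruction to a $1$-regular closure (the paper invokes the Handshaking Lemma) with $C_n$ as a $2$-regular closure. Your explicit cyclic ordering $u_1,v_1,\dots,u_m,v_m,w_1,\dots,w_{n-2m}$ simply spells out what the paper calls ``clearly'' a spanning subgraph of $C_n$.
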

	\begin{proof}
		If $n$ is even, then $\Gamma$ is a disjoint union of at least one $K_2$ and an even number of isolated vertices. It follows that we can add edges to $E(\Gamma)$ to obtain a perfect matching on the $n$ vertices of $\Gamma$. This gives a vertex-transitive closure of $\Gamma$ of degree $1$, and hence $\dvt(\Gamma)=1$.
		
		If $n$ is odd, then by the Handshaking Lemma we find that the degree of every vertex-transitive closure of $\Gamma$ must be even, and hence $\dvt(\Gamma)\geq 2$. On the other hand, $\Gamma$ is clearly a spanning subgraph of the cycle on $n$ vertices, and it follows that $\dvt(\Gamma) = 2$.
	\end{proof}
	
	For the rest of this section, we let $\Gamma$ be a graph of order $n$ and maximum degree $2$. If $\Gamma$ is acyclic, then $\Gamma$ is a spanning subgraph of the cycle on $n$ vertices, and hence $d_{vt}(\Gamma)=2$. If $\Gamma$ contains a cycle, then its vertex-transitive number can be greater than $2$, but the following theorem shows that it is always bounded above by $4$.
	
	\begin{theorem}\label{thm:maxdeg2bound}
		The vertex-transitive number of a graph with maximum degree $2$ is at most $4$.
	\end{theorem}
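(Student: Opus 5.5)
The plan is to exhibit, for every graph $\Gamma$ of order $n$ with maximum degree $2$, a single explicit $4$-regular vertex-transitive closure: the circulant $\Cay(\ZZ_n,\{1,-1,2,-2\})$. First I dispose of small orders. If $n\le 4$, then $K_n$ is a vertex-transitive closure of $\Gamma$ of degree $n-1\le 3$. For $n\ge 5$ the elements $1,-1,2,-2$ of $\ZZ_n$ are pairwise distinct and nonzero, so $C:=\Cay(\ZZ_n,\{\pm1,\pm2\})$ is a $4$-regular Cayley graph and hence vertex-transitive. It therefore suffices to show that $\Gamma$ is isomorphic to a spanning subgraph of $C$.

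As recalled in the background section, $\Gamma$ is a disjoint union of cycles, paths and isolated vertices. Let the components have orders $\ell_1,\dots,\ell_k$, so that $\sum \ell_j=n$. I partition $\{0,1,\dots,n-1\}$ into consecutive integer intervals $I_1,\dots,I_k$ with $|I_j|=\ell_j$, and embed the $j$-th component into the subgraph of $C$ induced on $I_j$. It is enough to embed each component inside its own interval: different intervals are disjoint, so the images of the components are vertex-disjoint and together form a spanning subgraph of $C$ isomorphic to $\Gamma$.

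The embeddings within one interval $I=\{a,a+1,\dots,a+\ell-1\}$ are as follows.
\begin{itemize}
\item An isolated vertex needs no edges.
\item A path on $\ell$ vertices is embedded as $a,a+1,\dots,a+\ell-1$, using only steps of $\pm1$.
\item A cycle has $\ell\ge 3$. I take the \emph{zigzag} cycle: start at $a$, go upward through $a+2,a+4,\dots$ to the largest vertex of $I$ with the same parity as $a$, then step by $1$ to the largest vertex of the opposite parity, descend by steps of $2$ through that parity class down to $a+1$, and close with the step from $a+1$ to $a$.
\end{itemize}
Every step of the zigzag is $\pm1$ or $\pm2$, it visits each vertex of $I$ exactly once, and it is a genuine cycle of length $\ell$. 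For instance, $\ell=3$ gives $a,a+2,a+1,a$, and $\ell=4$ gives $a,a+2,a+3,a+1,a$. All differences are taken modulo $n$. Since the integers in an interval differ by at most $n-1$ and are distinct, no two embedded edges coincide, and edges of step $\pm1$ or $\pm2$ remain edges of $C$ for $n\ge5$.

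The only point needing care is this zigzag construction, that is, checking that a consecutive block of any length $\ell\ge3$ carries a Hamilton cycle in the induced subgraph of $C$. The edge cases are a single cycle occupying all of $\ZZ_n$, and small $\ell$. For $\ell=n$ the same construction still works, because it never uses a wrap-around difference. Putting these pieces together gives $\dvt(\Gamma)\le 4$.
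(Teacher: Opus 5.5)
Your proposal is correct and takes essentially the same route as the paper: both embed each component into a block of consecutive vertices of $\Cay(\ZZ_n,\{\pm 1,\pm 2\})$, using $\pm1$ steps for paths and the same zigzag Hamilton cycle for cycles (up by $2$ through one parity class, across by $1$, down by $2$ through the other, then close with $\{a+1,a\}$). Your separate treatment of $n\le 4$ is a harmless extra; the paper covers it implicitly, since for small $n$ the Cayley graph just has degree below $4$.
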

	\begin{proof}
		Let $\Gamma$ be a graph of order $n$ and maximum degree $2$. We will show that $\Gamma$ is a spanning subgraph of the Cayley graph $\Gamma^* = \Cay(\ZZ_n,\{\pm 1,\pm 2\})$. Let $\Gamma'$ be a connected component of $\Gamma$ with $k$ vertices. Since $\Delta(\Gamma)=2$, we know that $\Gamma'$ is an isolated vertex, a path, or a cycle. We will show that in each case $\Gamma'$ is isomorphic to a subgraph of $\Gamma^*$ spanned by vertices $\{a,a+1,\dots,a+(k-1)\}$, where $a$ is any element of $\ZZ_n$. 
		
		This is clearly true if $\Gamma'$ is an isolated vertex. Also, since $\pm 1$ is in a generating set of $\Gamma^*$, the same holds in the case when $\Gamma'$ is a path. If $\Gamma'$ is a cycle and $k$ is even, then the paths $a,a+2,\dots,a+(k-2)$ and $a+1,a+3,\dots,a+(k-1)$ together with edges $\{a,a+1\}$ and $\{a+(k-2),a+(k-1)\}$ form a cycle of length $k$ in $\Gamma^*$. Finally, if $\Gamma'$ is a cycle and $k$ is odd, then a cycle of length $k$ in $\Gamma^*$ can be formed by the paths $a,a+2,\dots,a+(k-1)$ and $a+1,a+3,\dots,a+(k-2)$ and edges $\{a,a+1\}$ and $\{a+(k-2),a+(k-1)\}$. The proof now follows by induction on the number of connected components of $\Gamma$.
	\end{proof}
	
	The following observation characterises all graphs with maximum degree $2$ and vertex-transitive number $2$ (regardless of the parity of the order).
	
	\begin{proposition}\label{prop:dvt2}
		Let $\Gamma$ be a graph with maximum degree $2$. Then $\dvt(\Gamma)=2$ if and only if one of the following holds:
		\begin{enumerate}[label={\rm (\arabic*)},ref=(\arabic*)]
			\item\label{prop:dvt2:1} $\Gamma$ is acyclic;
			\item\label{prop:dvt2:2} all cycles in $\Gamma$ have the same length, call it $k$, and the multiset of all orders of the acyclic components of $\Gamma$ can be partitioned into subsets whose sums are $k$.
		\end{enumerate}
	\end{proposition}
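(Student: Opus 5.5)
The plan rests on one structural fact: a $2$-regular graph is vertex-transitive exactly when all of its components are cycles of one common length. One direction is clear, since a disjoint union of isomorphic cycles is vertex-transitive. Conversely, an automorphism maps a vertex only into a component of the same order, so in a vertex-transitive $2$-regular graph all components are isomorphic cycles. With this, $\dvt(\Gamma)=2$ holds if and only if $\Gamma$ is a spanning subgraph of some $m C_k$ with $mk=n$. Note that $\dvt(\Gamma)\ge 2$ is automatic because $\Delta(\Gamma)=2$. I would prove both directions of the proposition by analysing how the components of $\Gamma$ (cycles, paths, isolated vertices) can sit inside such a union of cycles.

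For sufficiency, case \ref{prop:dvt2:1} is already handled by the remark before Theorem~\ref{thm:maxdeg2bound}: an acyclic $\Gamma$ of maximum degree $2$ spans a subgraph of $C_n$. In case \ref{prop:dvt2:2}, each cycle of $\Gamma$ is kept as a $C_k$ of the closure. For each part of the given partition of the multiset of acyclic-component orders, the corresponding paths and isolated vertices, with total order $k$, are concatenated into one Hamilton path on those $k$ vertices. Closing that path with one edge gives another $C_k$. Here I must take care with degenerate cases: a single path on $k$ vertices closes directly; isolated vertices count as paths on one vertex; and a cycle length $k\ge3$ guarantees the result is a genuine cycle. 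The union of all these cycles is $(n/k)C_k$, which is a $2$-regular vertex-transitive closure.

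For necessity, suppose $\Gamma$ has a $2$-regular vertex-transitive closure $\Gamma^*\cong mC_k$ and that $\Gamma$ contains a cycle. Every cycle $C$ of $\Gamma$ lies inside one component of $\Gamma^*$, which is itself a cycle. A cycle subgraph of a cycle must be the whole cycle, so $|C|=k$, and every cycle of $\Gamma$ has the same length $k$. Each remaining component of $\Gamma^*$ is a $k$-cycle whose vertex set contains no vertices of any cycle of $\Gamma$. It therefore contains only acyclic components of $\Gamma$, and each of these lies entirely within it because $\Gamma$'s components are connected inside $\Gamma^*$. Grouping the acyclic components according to which component of $\Gamma^*$ contains them gives a partition of their orders into parts each summing to $k$, which is condition \ref{prop:dvt2:2}.

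The main obstacle is stating the structural fact about $2$-regular vertex-transitive graphs cleanly; all the rest is bookkeeping. A smaller point needs care in the necessity direction: a component of $\Gamma^*$ that contains a cycle of $\Gamma$ is used up entirely by that cycle, so no acyclic component can share it.
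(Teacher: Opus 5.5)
Your proposal is correct and follows essentially the same route as the paper. Both arguments use the fact that a $2$-regular vertex-transitive graph is a disjoint union of cycles of one common length, force each cycle of $\Gamma$ to be a whole component of the closure, and group the paths and isolated vertices into the remaining copies of $C_k$. Your write-up is simply more explicit than the paper's terse version, for instance about $k\ge 3$ and about why no acyclic component can share a component of the closure with a cycle of $\Gamma$.
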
 
	\begin{proof}
		Every acyclic graph with maximum degree $2$ is the disjoint union of paths and isolated vertices, and hence $C_n$ is a vertex-transitive closure of every such graph. This proves (1).
		
		To prove (2), first note that every vertex-transitive graph with maximum degree $2$ must be the disjoint union of one or more cycles of the same length. Hence, if $\Gamma$ is not acyclic and $\dvt(\Gamma)=2$, then all cycles of $\Gamma$ must have the same length, say $k$. Moreover, the subgraph of $\Gamma$ induced by all acyclic components of $\Gamma$ must be a spanning subgraph of $C_k\oplus \dots \oplus C_k$. Noting that the acyclic components of $\Gamma$ are only paths and isolated vertices, it follows that if the disjoint union of any of these components has exactly $k$ vertices, then it is a spanning subgraph of $C_k$. Hence it is sufficient to partition the set of acyclic components into subsets $S$, such that the sum of the orders of all components in $S$ is $k$. On the other hand, if no such partition exists, then $\Gamma$ cannot be a spanning subgraph of a vertex transitive-graph of degree $2$, and the rest follows.  
	\end{proof}
	
	We now consider the case where the order $n$ of $\Gamma$ is odd. By the Handshaking Lemma we know that every vertex-transitive closure of $\Gamma$ must have even degree, and hence by Theorem~\ref{thm:maxdeg2bound} we have either $\dvt(\Gamma) = 2$ or $\dvt(\Gamma)=4$. Furthermore, since by Proposition~\ref{prop:dvt2} we can decide whether $\dvt(\Gamma) = 2$ or not, the problem of determining the vertex-transitive number for graphs of odd order and maximum degree $2$ reduces to the problem of deciding whether $\Gamma$ satisfies either of the properties \ref{prop:dvt2:1} and \ref{prop:dvt2:2} in Proposition~\ref{prop:dvt2}. While property \ref{prop:dvt2:1} can be easily checked in polynomial time, property  \ref{prop:dvt2:2} is equivalent to a well-known NP-complete problem usually referred to as multi-way number partitioning.
	
	So far we have shown that every graph with maximum degree $2$ has vertex-transitive number at most $4$, and we have characterised those with vertex-transitive number $2$. In order to complete the picture it is necessary to address the case when the vertex-transitive number is equal to $3$. This turns out to be difficult, but we will provide some partial observations. First of all, if some vertex-transitive closure of $\Gamma$ has degree $3$, then the Handshaking Lemma implies that the order of this vertex-transitive closure (and hence also the order of $\Gamma$) must be even. In the following proposition we consider the case when all cycles in $\Gamma$ have even length.
	
	\begin{proposition}\label{prop:noodd}
		Let $\Gamma$ be a graph of even order with maximum degree $2$. If there are no cycles of odd length in $\Gamma$, then $\dvt(\Gamma)\leq 3$.
	\end{proposition}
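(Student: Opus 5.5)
Our plan is to embed $\Gamma$ as a spanning subgraph of a cubic vertex-transitive graph on $n$ vertices, namely the M\"obius ladder $M_{n/2}=\Cay(\ZZ_n,\{\pm1,n/2\})$ or the prism $Y_{n/2}$. A simpler target is the Cayley graph $\Gamma^*=\Cay(\ZZ_n,\{\pm 1, n/2\})$ for $n=2m$; it is $3$-regular (for $n\ge 4$; the case $n=2$ is trivial). Its Hamilton cycle $0,1,\dots,n-1$ lets us place all path components and isolated vertices consecutively, exactly as in the proof of Theorem~\ref{thm:maxdeg2bound}. The only issue is the even cycles, which cannot use only $\pm1$ edges. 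So we instead work with the prism/ladder picture, which has more room for even cycles.

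View the M\"obius ladder (or the prism) as two rows $u_0,\dots,u_{m-1}$ and $v_0,\dots,v_{m-1}$, with rungs $u_iv_i$ and row edges $u_iu_{i+1}$, $v_iv_{i+1}$. The wrap-around is twisted in the M\"obius ladder and straight in the prism. An even cycle of length $2\ell$ is realised on the $2\times\ell$ block of columns $i,\dots,i+\ell-1$: take $u_i\dots u_{i+\ell-1}$, the rung at column $i+\ell-1$, $v_{i+\ell-1}\dots v_i$, and the rung at column $i$. This needs $\ell\ge2$, which holds since cycles have length at least $4$ when even. We place all even cycles in consecutive disjoint column blocks, starting at column $0$ and using up some number $c$ of columns. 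The remaining $m-c$ columns form a ladder segment of $2(m-c)$ vertices. This segment contains a Hamilton path snaking through it, $u_c\dots u_{m-1}$ then a rung then $v_{m-1}\dots v_c$, and no wrap-around is needed. Along this path we lay out the path components and isolated vertices consecutively. Their total order is $n-2c=2(m-c)$, so they fill the segment exactly. Each path component occupies consecutive vertices of the Hamilton path, so its edges are edges of the ladder.

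The steps in order are:
\begin{enumerate}
\item Record that every even cycle has length $2\ell$ with $\ell\ge 2$.
\item Order the cycles and assign consecutive column blocks.
\item Verify the block cycle is a subgraph of the ladder. This needs only the non-wrapping edges, so it holds for both the prism and the M\"obius ladder.
\item Note that the acyclic part has even total order, because $n$ and the cycle orders are even, and embed it along the snake path.
\end{enumerate}
Finally, apply vertex-transitivity of $Y_m$ or $M_m$ (both Cayley graphs as recalled in Section~2), giving $\dvt(\Gamma)\le 3$.

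The main obstacle is small degenerate cases. If $m\le 2$, the prism or ladder is not simple or not cubic. For $n=4$, $\Gamma$ is a subgraph of $C_4\subseteq K_4$, and $K_4$ is cubic and vertex-transitive; for $n=2$ the bound is immediate. I would treat these separately and otherwise take $m\ge3$, where $Y_m$ is a simple cubic vertex-transitive graph.
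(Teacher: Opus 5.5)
Your proof is correct and follows essentially the paper's route: the paper also realises each even cycle of length $2\ell$ on a $2\times\ell$ block of consecutive columns of the prism (the paper writes $Y_n$, but $Y_{n/2}$ is meant). The only difference is that the paper first joins all acyclic components into one extra even cycle, whereas you lay them along a snake Hamilton path of the leftover columns. Your variant also covers the case where the acyclic part has exactly two vertices, where the paper's merged ``cycle'' would have length $2$, and your separate treatment of $n\le 4$ via $K_4$ is a sensible tidy-up.
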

	\begin{proof}
		We will show that the prism graph $Y_n$ of degree $3$ is a vertex-transitive closure of $\Gamma$. Since $Y_n$ is Cayley (and hence vertex-transitive), this will prove the assertion. First, we transform $\Gamma$ into the graph $\Gamma'$ by connecting all acyclic components of $\Gamma$ into a single cycle. Note that the order of $\Gamma$ and the lengths of all of its cycles are even, and hence the length of the new cycle in $\Gamma'$ is even as well. It follows that $\Gamma'$ is a supergraph of $\Gamma$ on the same vertex set with no cycles of odd length and no acyclic components.
		
		Next, let $a_0,\dots,a_{n-1}$ denote the vertices in the outer cycle of $Y_n$, and $b_0,\dots,b_{n-1}$ the vertices of the inner cycle of $Y_n$, so that $a_i$ is adjacent to $b_i$ for each $i\in \{0,\dots,n-1\}$, and $a_i$ and $a_j$ (and, equivalently, $b_i$ and $b_j$) are adjacent if and only if $|i-j|$ is equal to $1$ or $n-1$. Now, it can be easily seen that for any $k,\ell\in \ZZ_n$ such that $\ell\geq 2$, the induced subgraph of $\Gamma'$ spanned by vertices $\{a_k,a_{k+1},\dots,a_{k+(\ell-1)},b_k,b_{k+1},\dots,b_{k+(\ell-1)}\}$ contains a Hamiltonian cycle of length $2\ell$. Hence, it follows by induction on the number of cycles in $\Gamma'$ that $Y_n$ is a vertex-transitive closure of $\Gamma'$, and consequently $Y_n$ is also a vertex-transitive closure of $\Gamma$.   
	\end{proof}
	
	The following example shows the necessity of the condition in Proposition~\ref{prop:noodd} for $|V(\Gamma)|$ to be even.
	
	\begin{example}
		Let $\Gamma=K_{2,2} \oplus K_1$. Note that $\Gamma$ has maximum degree $2$, and its only cycle has even length. On the other hand, by the proof of Theorem~\ref{thm:degrees} we know that the only vertex-transitive closure of $\Gamma$ is $K_5$, and hence $\dvt(\Gamma)=4$. Note that $\Gamma$ is the smallest example of a graph of odd order with no odd cycles that is not acyclic. (Recall that acyclic graphs with maximum degree $2$ have vertex-transitive number $2$.) It follows that $\Gamma$ is the smallest example (in terms of both $|V(\Gamma)|$ and $|E(\Gamma)|$) of a graph of odd order with maximum degree $2$ and no cycles of odd length.
	\end{example}
	
	In contrast with the case when all cycles of $\Gamma$ have even length, the problem of determining $\dvt(\Gamma)$ is far more complex even in the case when $\Gamma$ contains a single cycle of odd length and no other cycles. Let $|V(\Gamma)|=2k$ and assume that $\Gamma$ contains a single cycle (say of length $m$). If $m$ is even, then by Proposition~\ref{prop:noodd} we have $\dvt(\Gamma)\leq 3$, and hence the vertex-transitive number of $\Gamma$ can be determined by checking whether $\Gamma$ satisfies property~\ref{prop:dvt2:2} of Proposition~\ref{prop:dvt2}. If $m$ is odd and $m=k$, then $\Gamma$ is clearly a spanning subgraph of the disjoint union of two cycles of length $k$, and hence $\dvt(\Gamma)=2$. The case when $m>k$ is dealt with in the following theorem.
	
	\begin{theorem}\label{thm:odd}
		Let $\Gamma$ be a graph of order $2k$ with maximum degree $2$ and exactly one cycle. If the length $m$ of the cycle satisfies $k<m<2k$, then $\dvt(\Gamma)=3$.
	\end{theorem}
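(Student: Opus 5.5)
The proof has two halves: a lower bound $\dvt(\Gamma)\geq 3$, and a construction of a cubic Cayley graph containing $\Gamma$ as a spanning subgraph.

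\textbf{Lower bound.} Since $\Delta(\Gamma)=2$, we have $\dvt(\Gamma)\geq 2$, so it suffices to rule out a $2$-regular vertex-transitive closure $\Gamma^*$. As in the proof of Proposition~\ref{prop:dvt2}, $\Gamma^*$ would be a disjoint union of cycles of a common length. The $m$-cycle $C$ of $\Gamma$ lies inside one component of $\Gamma^*$. A cycle that contains a cycle as a subgraph is that cycle, so this component is $C$ itself. Hence every component of $\Gamma^*$ is an $m$-cycle, and $m$ divides $2k$. But $k<m<2k$, so this is impossible, and $\dvt(\Gamma)\geq 3$.

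\textbf{Upper bound.} If $m$ is even, Proposition~\ref{prop:noodd} applies, since $\Gamma$ has even order and no odd cycle; it gives $\dvt(\Gamma)\le 3$. So assume $m$ is odd, and write $\Gamma = C\oplus R$, where $R$ is a disjoint union of paths and isolated vertices on $2k-m$ vertices. As in the proofs of Theorem~\ref{thm:maxdeg2bound} and Proposition~\ref{prop:dvt2}, it is enough to find, in some cubic Cayley graph $X$ of order $2k$, an $m$-cycle whose complement $X-V(C)$ has a Hamiltonian path. The components of $R$ can then be laid consecutively along that path.

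I would use two candidates for $X$. Label the rim of the M\"obius ladder $M_k=\Cay(\ZZ_{2k},\{\pm1,k\})$ by $0,\dots,2k-1$. Label the prism $Y_k$ by $a_i,b_i$ as in the proof of Proposition~\ref{prop:noodd}, and use the M\"obius ladder or the prism according to the parity of $m$. The basic cycles are built as follows.
\begin{itemize}
\item In $M_k$, follow the rim forward and switch sides along $2t+1$ chords $i\sim i+k$. The crossing points are chosen increasing, so the cycle closes through the edge $\{2k-1,0\}$. This gives a cycle through $k+1+2t$ vertices.
\item In $Y_k$, follow the outer and inner cycles forward and switch along $2t$ spokes $a_i b_i$. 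This gives cycles of length $k+2t$.
\end{itemize}
Thus $m\equiv k+1 \pmod 2$ is realised in $M_k$ and $m\equiv k\pmod 2$ in $Y_k$. In both cases every $t$ with $k<m<2k$ is attainable, because the crossings can be placed at distinct positions. One should also check the extreme values, for example $m=k+1$ in $Y_k$ and $m=2k-1$; these may require a small separate adjustment.

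\textbf{Main obstacle.} The hard step is to choose the crossing positions so that the omitted vertices form a subgraph with a Hamiltonian path. My first attempt is to bunch the crossings at consecutive positions $c, c+1, c+2, \dots$. Each consecutive pair of switches then omits one vertex on each side at matching positions, namely $j$ and $j+k$ (respectively $a_j$ and $b_j$). The omitted set is then a contiguous ``ladder strip'', made of two rim segments joined by chords or spokes, and such a strip has an obvious zigzag Hamiltonian path. I expect verifying this, together with the boundary cases in $m$, to need the most care. Once it holds, embedding $R$ along the path gives the cubic vertex-transitive closure, so $\dvt(\Gamma)=3$.
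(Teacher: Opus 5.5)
Your lower bound is correct (it is a direct version of the paper's appeal to Proposition~\ref{prop:dvt2}), and reducing even $m$ to Proposition~\ref{prop:noodd} is valid. Your host graphs, $Y_k$ for $m\equiv k$ and $M_k$ for $m\equiv k+1 \pmod 2$, with the $r=m-k$ rungs bunched at consecutive positions, are exactly the paper's construction. The gap is the step that carries the content of the upper bound: that the vertices missed by the $m$-cycle span a graph with a Hamiltonian path. You leave this as an expectation, and the picture you give for it cannot occur. A cycle that runs forward around the ladder meets every position $j\in\{0,\dots,k-1\}$, so it never omits both $j$ and $j+k$ (or both $a_j$ and $b_j$). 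The omitted set therefore contains no rung at all, and is never a ``ladder strip'' traversed by a zigzag. Your argument for the key step is wrong, even though its conclusion happens to hold for your configuration.

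What actually happens is simpler. Put the rungs at positions $0,1,\dots,r-1$, where $1\le r\le k-1$. The cycle zigzags through this block and then returns along a single rim segment containing exactly one vertex at each position $r,\dots,k-1$.

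In $Y_k$ ($r$ even) the cycle is $b_0a_0a_1b_1b_2a_2\cdots a_{r-1}b_{r-1}$, followed by $b_r,\dots,b_{k-1}$ and back to $b_0$. The omitted vertices are $a_r,\dots,a_{k-1}$.

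In $M_k$ ($r$ odd) the zigzag $k,0,1,k+1,k+2,2,\dots$ ends at the rim vertex $r-1$, and the rim path $r-1,r,\dots,k-1,k$ closes it. The omitted vertices are $k+r,\dots,2k-1$.

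In both cases the cycle has $2r+(k-r)=m$ vertices. The omitted set is a rim path on $k-r=2k-m\ge 1$ vertices, which contains $R$ as a spanning subgraph. This works uniformly for every admissible $r$, so no boundary case needs a separate adjustment. Under your own parity bookkeeping, $m=k+1$ never arises in $Y_k$.
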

	\begin{proof}
		Since the sum of all orders of the acyclic components of $\Gamma$ is non-zero, but also smaller than $m$, by Proposition~\ref{prop:dvt2} we deduce that $\dvt(\Gamma)>2$. We proceed by considering two cases depending on whether $m-k$ is even or odd.
		
		In the case when $m-k$ is even, we will show that $\Gamma$ is a spanning subgraph of the prism graph $Y_k$. First, adopting the notation from the proof of Proposition~\ref{prop:noodd}, take the induced subgraph $A$ of $Y_k$ spanned by vertices $a_{m-k},\, a_{m-k+1},\, \dots,\, a_{k-1}$. Since $A$ is isomorphic to the path of order $2k-m$, which is exactly the number of vertices in all acyclic components of $\Gamma$, we deduce that the acyclic components of $\Gamma$ form a spanning subgraph of $A$. Next, let $B$ be the induced subgraph of $Y_k$ spanning all of its vertices not in $A$. Since $m-k$ is even, we find that edges $\{a_0,a_1\},\, \{a_2,a_3\},\, \dots,\, \{a_{m-k-2},a_{m-k-1}\}$, edges $\{b_1,b_2\},\, \{b_3,b_4\},\, \dots,\, \{b_{m-k-3},b_{m-k-2}\}$, and edges $\{a_i,b_i\}$ for $i\in \{0,1,\dots,m-k-1\}$ together with the path $(b_{m-k-1},b_{m-k},\dots,b_{k-1},b_0)$ form a Hamiltonian cycle in $B$ of length $m$, as required.
		
		In the case when $m-k$ is odd, we will show that $\Gamma$ is a spanning subgraph of the M\"{o}bius ladder $M_k$. We use the same notation for $M_k$ as for $Y_k$, with the only distinction that the edges $\{a_{k-1},a_0\}$ and $\{b_{k-1},b_0\}$ are swapped for edges $\{a_{k-1},b_0\}$ and $\{b_{k-1},a_0\}$. As in the previous case, acyclic components of $\Gamma$ form a spanning subgraph of the path $(a_{m-k}, a_{m-k+1}, \dots, a_{k-1})$ in $M_k$. An $m$-cycle in $M_k$ spanning the remaining vertices  can be formed by the path $(b_{m-k-1},b_{m-k},\dots, b_{k-1})$ together with the edge $\{b_{k-1},a_0\}$, edges $\{b_{2j-1},b_{2j}\}$ and $\{a_{2j},a_{2j+1}\}$ for $j\in \{0,1,\dots, (m-k-3)/2 \}$, and edges $\{a_i,b_i\}$ for $i\in \{0,1,\dots,m-k-1\}$.
	\end{proof}
	
	We already addressed the cases $m>k$ and $m=k$; next we look at the case $m<k$. Here we investigate only the simplest instance where $m=3$. Since $\Gamma$ contains a cycle, we have $\dvt(\Gamma) = 2$ if and only if $\Gamma$ satisfies property \ref{prop:dvt2:2} of Proposition~\ref{prop:dvt2}. If $\dvt(\Gamma) = 3$, then $\Gamma$ has a cubic vertex-transitive closure $\Gamma'$ of girth $3$. Since $\Gamma'$ is vertex-transitive, each of its components must be a connected vertex-transitive cubic graph of girth $3$. In~\cite{Eiben} it was shown that every such graph is isomorphic to $K_4$, $Y_3$, or a generalised truncation of an arc-transitive cubic graph by the triangle graph (the order of such a graph is always three times the order of the corresponding base graph). It follows that the order of $\Gamma'$ (and consequently also the order of $\Gamma$) must be divisible by $3$ or $4$. This, however, is not sufficient for $\Gamma$ to have vertex-transitive number $3$. Take for example  $K_3 \oplus P_{33}$ and suppose that it admits a cubic vertex-transitive completion $\Gamma'$. Clearly $\Gamma'$ must be connected, and so it is isomorphic to a generalised truncation of an arc-transitive cubic graph by the triangle graph. But this is not possible, since there are no arc-transitive cubic graphs of order $12$ (see~\cite{ConderDobcsanyi} for example), and so $\dvt(K_3 \oplus P_{33})=4$. As we can see, for $m=3$ the problem of determining $\dvt(\Gamma)$ is closely related to arc-transitive cubic graphs, knowledge of which is quite limited. We believe that further investigation both of this and the general case (where $m<k$) is warranted.

	\section{Graphs with large vertex-transitive numbers}
	
	In this section we consider graphs with large vertex-transitive numbers. Theorem~\ref{thm:degrees} implies that the gap between the vertex-transitive number and the maximum degree of a graph can be arbitrarily large. On the other hand, by Theorem~\ref{thm:maxdeg2bound} we know that if $\Gamma$ has maximum degree $2$, then $\dvt(\Gamma)\leq 4$, and hence in this case $\dvt(\Gamma)-\Delta(\Gamma)$ is at most $2$. The following theorem shows that there is no such bound for larger maximum degrees.
	
	\begin{theorem}\label{thm:unbounded}
		Let $S$ be the set of all graphs with maximum degree $d$. Then the set $\{\dvt(\Gamma) \mid \Gamma \in S \}$ is unbounded if and only if $d\geq 3$.     
	\end{theorem}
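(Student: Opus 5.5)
The ``only if'' direction follows from earlier results. If $d=0$ then $\dvt(\Gamma)=0$. If $d=1$ then Proposition~\ref{prop:maxdeg1} gives $\dvt(\Gamma)\le 2$. If $d=2$ then Theorem~\ref{thm:maxdeg2bound} gives $\dvt(\Gamma)\le 4$. So for $d\le 2$ the set is bounded. The work is in the ``if'' direction: for each $d\ge 3$ and each $N$, we need a graph of maximum degree $d$ whose vertex-transitive number exceeds $N$.

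Since adding edges cannot decrease $\dvt$, it suffices to treat $d=3$. To see this, take a family $\Gamma_N$ of maximum degree $3$ with $\dvt(\Gamma_N)\to\infty$. Attach to one vertex of degree at most $1$ (or to a new pendant structure) enough edges to raise the maximum degree to $d$ while keeping $\Gamma_N$ as a subgraph. Alternatively, take the disjoint union of $\Gamma_N$ with $K_{1,d}$, whose maximum degree is $d$, and choose the order parity carefully. A vertex-transitive closure of a graph containing $\Gamma_N$ as a subgraph must still be large. To make this rigorous, I would ensure the obstruction below concerns a local configuration that survives adding a disjoint piece.

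The core idea is to exploit rigidity of girth together with local structure. A vertex-transitive closure $\Gamma'$ is regular, and every vertex must look alike. So if $\Gamma$ contains one vertex lying in many short cycles in a specific pattern, say a copy of $K_4$, then every vertex of $\Gamma'$ lies in a $K_4$. Meanwhile $\Gamma$ also contains a long induced structure, such as a long path, that must be covered. This alone is not enough, since $\Gamma'$ can be dense. Instead I would use a counting and parity argument in the spirit of Example~\ref{ex:bigvt} and Theorem~\ref{thm:degrees}. The candidate is $\Gamma_N=K_4\oplus \overline{K_{p-4}}$ or, better, a cubic graph on a prime number $p$ of vertices plus structure. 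The most promising choice is an order $n=p$ prime, since then every vertex-transitive graph of order $p$ is a circulant $\Cay(\ZZ_p,X)$ (a transitive group of prime degree contains a $p$-cycle).

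The plan is then to choose $\Gamma$ with maximum degree $3$ on $p$ vertices such that any circulant containing it needs a large connection set $X$. For instance, take $\Gamma$ to be a disjoint union of many copies of $K_4$ (with some isolated vertices to reach order $p$). Any circulant $\Cay(\ZZ_p,X)$ containing $m$ vertex-disjoint $K_4$'s requires many $4$-cliques. If $|X|$ is bounded by some $D$, then $\Cay(\ZZ_p,X)$ is a circulant with bounded connection set. By a Ramsey or additive-combinatorics argument, the subgraph structure is constrained: a clique $\{x_1,\dots,x_4\}$ needs all differences in $X$, and there are only boundedly many difference patterns. That by itself does not forbid $K_4$'s. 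So a more robust obstruction is needed, and finding the right one is the main obstacle. My first attempt would be expansion. Take $\Gamma$ to be a cubic expander (or a random cubic graph) on $p$ vertices. A circulant of bounded degree $D$ on $p$ vertices is abelian Cayley, and bounded-degree abelian Cayley graphs have diameter at least of order $p^{1/D}$ and poor expansion (small sets of arithmetic-progression shape have boundary $O(D|S|/p^{1/D})$-type behaviour). A supergraph of an expander is an expander, so $\dvt(\Gamma_p)\to\infty$ as $p\to\infty$. To handle general (non-prime) orders this prime choice avoids needing Cayley structure. The key step I would verify carefully is the known fact that Cayley graphs of abelian groups with degree $D$ cannot be $\varepsilon$-expanders for large order, which gives the unboundedness.
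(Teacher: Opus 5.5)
Your ``only if'' direction is the paper's. For the ``if'' direction, once the abandoned $K_4$ and disjoint-union ideas are set aside, your expander argument is sound in outline. It runs on the same two ingredients as the paper. The first is prime order $p$, which forces every vertex-transitive closure to be a circulant. The second is the polynomial growth of bounded-degree abelian Cayley graphs: a ball of radius $r$ in $\Cay(\ZZ_p,X)$ with $|X|\le D$ has at most ${r+D \choose D}$ elements, so the diameter is at least of order $p^{1/D}$.

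Where you differ is in how small diameter is forced on the closure. The paper uses an explicit tree: $d$-ary of depth $k$ with about $d^k$ vertices, trimmed to a prime order $p>t/2$ via Bertrand's postulate. The root has eccentricity $k$, so by vertex-transitivity every closure has diameter at most $k$, and the abelian Moore bound $M_{AC}(d',k)$, polynomial in $k$, cannot reach $p$. You use an expander instead. However, the only consequence of expansion you need is that a bounded-degree expander on $p$ vertices has diameter $O(\log p)$, which every spanning supergraph inherits. So the expander is a heavier stand-in for the tree. It requires the (true but nontrivial) existence of bounded-degree expanders of every large order. The tree is explicit, and it shows why $d\ge 3$ is the threshold: at least two children per vertex gives exponential growth, while for $d=2$ the tree is a path. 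State the final contradiction directly as diameter $O(\log p)$ versus $\Omega(p^{1/D})$ for the closure, rather than via the vague remark about arithmetic-progression sets having small boundary.

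Three points need repair before this is a proof.
\begin{enumerate}
\item There is no cubic graph on an odd prime number of vertices. Use a graph of maximum degree $3$ instead, e.g.\ a cubic expander on $p-1$ vertices with one edge subdivided; this raises the diameter by at most $2$.
\item Your reduction from $d$ to $3$ must keep the vertex set fixed. Adjoining a disjoint $K_{1,d}$ changes the order, destroying primality, and monotonicity of $\dvt$ holds only for supergraphs on the same vertex set. Instead, add $d-3$ edges from a vertex of degree $3$ to non-neighbours, whose degrees rise to at most $4\le d$. Alternatively, run the argument directly with a graph of maximum degree $d$, as the paper does.
\item Your sentence that the prime choice ``avoids needing Cayley structure'' has it backwards. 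Primality is exactly what supplies the abelian Cayley structure, and it is indispensable, since bounded-degree vertex-transitive expanders exist (e.g.\ Cayley graphs of $\mathrm{SL}(2,q)$).
\end{enumerate}
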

	\begin{proof}
		The assertion is clearly true for $d=0$, and for $d=1$ and $d=2$ it is an immediate consequence of Proposition~\ref{prop:maxdeg1} and Theorem~\ref{thm:maxdeg2bound}. Hence assume $d\geq 3$, and suppose to the contrary that there exists some $D$ such that the vertex-transitive number of every graph with maximum degree $d$ is at most $D$.
		
		Let $k$ be any positive integer, let $T$ be a $d$-ary tree of depth $k$ such that the root vertex has $d$ children and all vertices at depth smaller than $k$ have exactly $d-1$ children, and let $t$ denote the order of $T$. Note that $t=d^k+O(d^{k-1})$ as $k\to \infty$. Next, let $p$ be the largest prime less than or equal to $t$, and note that by Bertrand's postulate we have $t-p < t/2$. On the other hand, it can be easily seen that $t/2 < d^k$, and hence we can remove $t-p$ vertices from $T$ so that the resulting graph $G$ is a $d$-ary tree of depth $k$ and order greater than $t/2$. Note here that the order of $G$ is $p$, and so any vertex-transitive closure of $G$ must be a vertex-transitive graph of prime order. Since every vertex-transitive graph of prime order is a circulant (see~\cite{Turner} for example), it follows that every vertex-transitive closure of $G$ is a circulant.  
		
		Let $d'=\dvt(G)$, and let $G'$ be a vertex-transitive closure of $G$ of degree $d'$. Since $G$ has maximum degree $d$, we know that $d'\leq D$. The root vertex $v$ of $G$ is at distance at most $k$ from each vertex of $G$, and so the same is true for the corresponding vertex $v'$ of $G'$. Then, by vertex-transitivity of $G'$, we deduce that the diameter of $G$ is at most $k$. The order of a graph of degree $d'$ and diameter $k$ is bounded above by the Moore bound, which has asymptotic form $(d')^k+O\left((d')^{k-1}\right)$ as $k \to \infty$. In our case, however, we know that $G'$ is a Cayley graph of an abelian group, and in this situation we have a much better upper bound $M_{AC}(d',k)$ on $|V(G')|$ (see~\cite{Lewis} for details). In particular, we have the following asymptotic form:
		\[
		M_{AC}(d',k)=\begin{cases}
			(2^f/f!)k^f+O(k^{f-1}) & \text{for even }d',\text{ where } f=d'/2,\\
			(2^{f+1}/f!)k^f+O(k^{f-1}) & \text{for odd }d',\text{ where } f=(d'-1)/2.
		\end{cases}
		\]
		On the other hand, the order of $G'$ is equal to the order of $G$, and so it is greater than $t/2$. This gives a lower bound of the form $(d^k)/2+O(d^{k-1})$ as $k\to \infty$. It follows that the orders of graphs $G'$ (which exist for each positive integer $k$) can be bounded below by an exponential function in $k$, and bounded above by a polynomial function in $k$, a contradiction.\end{proof}
	
	Although the construction in the proof of Theorem~\ref{thm:unbounded} produces a graph for which the gap between its maximum degree and vertex-transitive number can be arbitrarily large (unless the maximum degree is smaller than or equal to $2$), this gap is negligibly small in comparison with the order of such a graph. This leads to the interesting problem of finding (or at least bounding) the largest possible gap between the maximum degree and the vertex-transitive number of a graph of order $n$. Denote this largest possible gap by $\gvt(n)$, and note that by Theorem~\ref{thm:degrees} for every $n\geq 4$ there exists a graph of order $n$ and maximum degree $\lfloor n/2 \rfloor$ with vertex-transitive number $n-1$. It follows that $\gvt(n)\geq n-1-\lfloor n/2 \rfloor$ for each $n\geq 4$. (It can be easily verified that this bound also holds for $n\in \{1,2,3\}$.) Remarkably, we have not found an example of a finite graph for which the gap between $\Delta(\Gamma)$ and $\dvt(\Gamma)$ is larger than $n-1-\lfloor n/2 \rfloor$. This creates an interesting open question.

	\section{Vertex-transitive numbers for locally finite graphs}
	
	The concept of a vertex-transitive number can be naturally extended to locally finite graphs. Let $\Gamma$ be a locally finite graph. If $\Gamma$ admits no locally finite vertex-transitive closure, we define $\dvt(\Gamma)=\infty$. Otherwise, $\dvt(\Gamma)$ is defined as the smallest integer $k$ for which there exists a $k$-regular locally finite vertex-transitive closure of $\Gamma$. 
	
	For finite graphs, Theorem~\ref{thm:maxdeg2bound} states that if a graph has maximum degree $2$, then its vertex-transitive number is at most $4$. In this case, every graph on $n$ vertices with maximum degree $2$ is a spanning subgraph of the Cayley graph $\Cay(\ZZ_n,\{\pm 1,\pm 2\})$; however, this does not hold for locally finite graphs. Here are two examples of locally finite graphs with maximum degree $2$ that are not spanning subgraphs of $\Cay(\ZZ,\{\pm 1,\pm 2\})$ (nor of disjoint unions of any number of such graphs): the one-way infinite path $P_{\infty}$ (Figure~\ref{fig:infinite:a}) and the disjoint union of the two-way infinite path $P^*_{\infty}$ with $C_3$ (Figure~\ref{fig:infinite:b}). As shown in Figure~\ref{fig:infinite}, both graphs admit locally finite vertex-transitive closures of degree $4$, but their structures are more complex. To date, we have not found any counterexample to the extension of Theorem~\ref{thm:maxdeg2bound} to locally finite graphs, and we leave this as an open question.
	
	\begin{figure}[htb]
\centering
\subfloat[Graph $P_{\infty}$\label{fig:infinite:a}]{
\begin{tikzpicture}[thick]
\draw[black!20] (-3,2.5) -- (3,2.5);
\draw[black!20] (-3,1.5) -- (3,1.5);
\draw[black!20] (-3,0.5) -- (3,0.5);
\draw[black!20] (-3,-0.5) -- (3,-0.5);
\draw[black!20] (-3,-1.5) -- (3,-1.5);
\draw[black!20] (-3,-2.5) -- (3,-2.5);

\draw[black!20] (2.5,-3) -- (2.5,3);
\draw[black!20] (1.5,-3) -- (1.5,3);
\draw[black!20] (0.5,-3) -- (0.5,3);
\draw[black!20] (-0.5,-3) -- (-0.5,3);
\draw[black!20] (-1.5,-3) -- (-1.5,3);
\draw[black!20] (-2.5,-3) -- (-2.5,3);

\draw[very thick] (-0.5,-0.5) -- (0.5,-0.5) -- (0.5,0.5) --  (-1.5,0.5)  -- (-1.5,-1.5) -- (1.5,-1.5) -- (1.5,1.5) -- (-2.5,1.5) -- (-2.5,-2.5) -- (2.5,-2.5) -- (2.5,2.5) -- (-2.5,2.5);
\draw[dotted, very thick] (-2.5,2.5) -- (-3,2.5);

\foreach \i in {1,2,...,6}
\foreach \j in {1,2,...,6}
\node[circle,draw,fill=white] at (-3.5+\i,-3.5+\j) {};

\end{tikzpicture}
}
\quad\quad\quad
\subfloat[Graph $P^*_{\infty} \oplus C_3$\label{fig:infinite:b}]{
\begin{tikzpicture}[thick]
\foreach \i in {1,2}
\coordinate (A\i) at (0:\i*2);
\foreach \i in {1,2,...,3}
\coordinate (B\i) at (60:\i);
\foreach \i in {1,2,...,3}
\coordinate (C\i) at (120:\i);
\foreach \i in {1,2}
\coordinate (D\i) at (180:\i*2);
\foreach \i in {1,2,...,3}
\coordinate (E\i) at (240:\i);
\foreach \i in {1,2,...,3}
\coordinate (F\i) at (300:\i);

\coordinate (X) at (0,0);

\foreach \i in {1,2,...,3}
\coordinate (P\i) at ($(D2)+(60:\i)$);

\foreach \i in {1,2,...,3}
\coordinate (Q\i) at ($(D2)+(300:\i)$);

\foreach \i in {1,2,...,3}
\coordinate (R\i) at ($(A2)+(120:\i)$);

\foreach \i in {1,2,...,3}
\coordinate (S\i) at ($(A2)+(240:\i)$);

\foreach \i in {1,2}
\coordinate (I\i) at ($(P1)+(0:\i)$);

\foreach \i in {1,2}
\coordinate (J\i) at ($(Q1)+(0:\i)$);

\foreach \i in {1,2}
\coordinate (K\i) at ($(C3)+(0:\i)$);

\foreach \i in {1,2}
\coordinate (L\i) at ($(E3)+(0:\i)$);

\foreach \i in {1,2}
\coordinate (M\i) at ($(B1)+(0:\i)$);

\foreach \i in {1,2}
\coordinate (N\i) at ($(F1)+(0:\i)$);

\draw[black!20] ($(P3)+(180:0.5)$) -- ($(R3)+(0:0.5)$);
\draw[black!20] ($(R3)+(120:0.5)$) -- ($(A2)+(300:0.5)$);
\draw[black!20] ($(A2)+(60:0.5)$) -- ($(S3)+(240:0.5)$);
\draw[black!20] ($(S3)+(0:0.5)$) -- ($(Q3)+(180:0.5)$);;
\draw[black!20] ($(Q3)+(300:0.5)$) -- ($(D2)+(120:0.5)$);;
\draw[black!20] ($(D2)+(240:0.5)$) -- ($(P3)+(60:0.5)$);;
\draw[black!20] ($(P1)+(180:0.5)$) -- ($(R1)+(0:0.5)$);
\draw[black!20] ($(C3)+(120:0.5)$) -- ($(F3)+(300:0.5)$);
\draw[black!20] ($(K2)+(120:0.5)$) -- ($(S2)+(300:0.5)$);
\draw[black!20] ($(P2)+(120:0.5)$) -- ($(L1)+(300:0.5)$);
\draw[black!20] ($(Q1)+(180:0.5)$) -- ($(S1)+(0:0.5)$);
\draw[black!20] ($(K1)+(60:0.5)$) -- ($(Q2)+(240:0.5)$);
\draw[black!20] ($(B3)+(60:0.5)$) -- ($(E3)+(240:0.5)$);
\draw[black!20] ($(R2)+(60:0.5)$) -- ($(L2)+(240:0.5)$);

\draw[very thick] (X) -- (E1) -- (F1) -- (X);
\draw[very thick] (C1) -- (B1) -- (M1) -- (A1) -- (N1) -- (F2) -- (L2) -- (F3) -- (S3) -- (S2) -- (N2) -- (S1) -- (A2) -- (R1) -- (M2) -- (R2) -- (R3);
\draw[very thick] (C1) -- (I2) -- (D1) -- (J2) -- (E2) -- (L1) -- (E3) -- (Q3) -- (Q2) -- (J1) -- (Q1) -- (D2) -- (P1) -- (I1) -- (P2) -- (P3) -- (C3) -- (C2) -- (K1) -- (K2) -- (B2) -- (B3);
\draw[dotted, very thick] (B3) -- ($(B3)+(60:0.5)$);
\draw[dotted, very thick] (R3) -- ($(R3)+(0:0.5)$);

\foreach \i in {1,2}
\node[circle,draw,fill=white] (A\i) at (0:\i*2) {};
\foreach \i in {1,2,...,3}
\node[circle,draw,fill=white] (B\i) at (60:\i) {};
\foreach \i in {1,2,...,3}
\node[circle,draw,fill=white] (C\i) at (120:\i) {};
\foreach \i in {1,2}
\node[circle,draw,fill=white] (D\i) at (180:\i*2) {};
\foreach \i in {1,2,...,3}
\node[circle,draw,fill=white] (E\i) at (240:\i) {};
\foreach \i in {1,2,...,3}
\node[circle,draw,fill=white] (F\i) at (300:\i) {};

\node[circle,draw,fill=white] (X) at (0,0) {};

\foreach \i in {1,2,...,3}
\node[circle,draw,fill=white] (P\i) at ($(D2)+(60:\i)$) {};

\foreach \i in {1,2,...,3}
\node[circle,draw,fill=white] (Q\i) at ($(D2)+(300:\i)$) {};

\foreach \i in {1,2,...,3}
\node[circle,draw,fill=white] (R\i) at ($(A2)+(120:\i)$) {};

\foreach \i in {1,2,...,3}
\node[circle,draw,fill=white] (S\i) at ($(A2)+(240:\i)$) {};

\foreach \i in {1,2}
\node[circle,draw,fill=white] (I\i) at ($(P1)+(0:\i)$) {};

\foreach \i in {1,2}
\node[circle,draw,fill=white] (J\i) at ($(Q1)+(0:\i)$) {};

\foreach \i in {1,2}
\node[circle,draw,fill=white] (K\i) at ($(C3)+(0:\i)$) {};

\foreach \i in {1,2}
\node[circle,draw,fill=white] (L\i) at ($(E3)+(0:\i)$) {};

\foreach \i in {1,2}
\node[circle,draw,fill=white] (M\i) at ($(B1)+(0:\i)$) {};

\foreach \i in {1,2}
\node[circle,draw,fill=white] (N\i) at ($(F1)+(0:\i)$) {};

\end{tikzpicture}
}
\caption{Vertex-transitive completions of degree $4$ for $P_{\infty}$ and $P^*_{\infty} \oplus C_3$}
\label{fig:infinite}
\end{figure}

	We conclude this section with another open problem worth further investigation. A natural question in this setting is whether all locally finite graphs have a finite vertex-transitive number. This is clearly not true, as the vertex-transitive number of any locally finite graph with unbounded degrees must be infinite. The question becomes much more interesting when we restrict attention to locally finite graphs with bounded degrees. In this case, the problem can be stated as follows: Does there exist a locally finite graph with bounded maximum degree that admits no locally finite vertex-transitive supergraph on the same vertex set?


\bmsubsection*{Acknowledgments}
Martin Bachrat{\' y} acknowledges funding from the EU NextGenerationEU through the Recovery and Resilience Plan for Slovakia under the project No. 09I03-03-V04-00272. {\v S}tef{\' a}nia Glevitzk{\' a} acknowledges funding from the VEGA Research Grant 1/0437/23, and the Comenius University Grant UK/1436/2026. Jozef {\v S}ir{\' a}{\v n} acknowledges funding from the APVV Research Grants 22-0005 and 23-0076, and the VEGA Research Grants 1/0069/23 and 1/0011/25. James Tuite acknowledges funding from an LMS Early Career Fellowship (grant ECF-2021-27).

\bmsubsection*{Conflicts of Interest}
The authors declare no conflicts of interest.

\bibliography{vtclosures}


\end{document}